\newtheorem{theorem}{Theorem}[section]
\newtheorem{corollary}[theorem]{Corollary}
\theoremstyle{definition}
\theoremstyle{remark}
\newtheorem{remark}[theorem]{Remark}
\numberwithin{equation}{section}
\begin{document}

\setcounter{page}{1}

\title[Spectra and eigenspaces of non-normal Cayley graphs]{Spectra and eigenspaces of non-normal Cayley graphs}

\author{Yang Chen}
\address{School of Science,
Jimei University, Xiamen 361021, PR China}
\email{chenyang1729@hotmail.com}

\author{Xuanrui Hu}
\address{School of Science,
Jimei University, Xiamen 361021, PR China}
\email{1950113537@qq.com}

\date{}
\maketitle

\begin{abstract} In this paper, we construct some non-normal Cayley graphs and explicitly provide their spectra and eigenspaces using representation theory of finite groups.

\

{\it Keywords:} Cayley graph, spectrum, eigenspace, split extension, representation theory.

{\it AMS Subject Classification:} 05C25, 05C50.
\end{abstract}

\section{Introduction}

The spectra and eigenspaces of the adjacency matrices of graphs are important algebraic invariants. Graphs with strong symmetry properties are of particular interest in this regard. Lov\'{a}sz \cite{Lov} determined the spectrum of a graph with transitive automorphism group in terms of group characters. Babai \cite{Bab} gave a more handy formula of the spectrum of a Cayley graph by different methods. However, their formulas cannot enable us to explicitly compute eigenvalues of all Cayley graphs, and does not gave eigenspaces. It is worth mentioning that the adjacency matrix of a Cayley color graph is the transpose of a special group matrix, and its characteristic polynomial is the group determinant proposed by Frobenius \cite{Fro1, Fro2}, which motivated Frobenius to introduce characters of finite groups (see \cite{Cur}). From this, we can analyze the spectrum of a Cayley color graph by Frobenius determinant theorem, which describes the factorization of the group determinant for a finite group. In fact, Lov\'{a}sz and Babai's results are contained in the Peter-Weyl theorem (see Theorem \ref{thm21}) generalizing the significant facts about the decomposition of the regular representation of any finite group, discovered by Frobenius and Schur. A Cayley graph is called \textit{normal} if the connection set is closed under conjugation, that is, it is the union of conjugacy classes of the group. Note that the term normal Cayley graph has been used also by Xu \cite{Xu} to denote a different graph construction. The spectra of the normal Cayley graphs can be explicitly calculated using the irreducible characters of the underlying group, see Diaconis and Shahshahani \cite{DiaSha} and Zieschang \cite{Zie}. These results were then applied to other research areas. In 2022, Sin and Sorci \cite{SinSor} studied continuous-time quantum walks on normal Cayley graphs of extraspecial groups. In 2023, \'{A}rnad\'{o}ttir and Godsil \cite{ArnGod} proved an upper bound on the number of pairwise strongly cospectral vertices in a normal Cayley graph in terms of the multiplicities of its eigenvalues. However, the discussion becomes more complicated in the non-normal case, and there are few relevant research results. Since the dimensions of the irreducible representations of the dihedral group are less than or equal to $2$, Cao, Chen and Ling \cite{CCL} could study perfect state transfer on non-normal Cayley graphs of dihedral groups by direct calculations. We would like to give a formular for explicitly calculating spectra and eigenspaces of all non-normal Cayley graphs, but this seems impossible. From a basic algebraic point of view, we consider using group extension to study non-normal Cayley graphs. That is, normal Cayley graphs of subgroups and quotient groups are used to construct non-normal Cayley graphs. In this paper, we construct some non-normal Cayley graphs for split group extension and exactly give spectra and eigenspaces of these graphs.

\section{Preliminaries}

Firstly, we recall some definitions and related results concerning Cayley color graphs and representation theory of finite groups. Let $\mathbb{C}$ denote the field of complex numbers. A \textit{color graph} is a pair $\Gamma= (V; c)$, where $V= \{v_1, \ldots, v_n\}$ is the vertex set and $c: V\times V\rightarrow \mathbb{C}$ is a function, associating a color with each directed edge. Non-edges can be interpreted as ones having color 0. The \textit{adjacency matrix} of $\Gamma$ is defined as
$$\text{adj}(\Gamma)= (a_{ij})_{i, j=1}^n, \text{ where } a_{ij}= c(v_i, v_j).$$
The \textit{spectrum} of $\Gamma$ is that of its adjacency matrix. For $G$ a group and $\alpha: G\rightarrow \mathbb{C}$ a function, the \textit{Cayley color graph} $\Gamma(G; \alpha)$ is defined on the vertex set $V(\Gamma)= G$ by $c(g, g')= \alpha(g'g^{-1})$, $g, g'\in G$. If $\alpha(g)\in \{0, 1\}, g\in G$ and the set $S= \{g| \alpha(g)= 1\}$ generates $G$, then $\Gamma$ is a \textit{Cayley digraph} of $G$ with connection set $S$, denoted by $\Gamma(G, S)$. When $S$ is inverse-closed (i.e. $S= S^{-1}$) and does not contain the identity element, $\Gamma(G, S)$ can be represented as a simple undirected graph called the \textit{Cayley graph}.

The reader is referred to \cite{Ser} for representation theory of finite groups. Let $G$ be a finite group of order $n$. We denonte by $\mathbb{C}G$ the algebra of $G$ over $\mathbb{C}$. Each element $f$ of $\mathbb{C}G$ can then be uniquely written in the form
$$f= \sum_{g\in G} f_g g, f_g\in \mathbb{C},$$
and multiplication in $\mathbb{C}G$ extends that in $G$. The group algebra $\mathbb{C}G$ is isomorphic to its dual $L(G)$, which is the set of complex-valued functions on $G$ with the convolution product. For $f\in L(G)$ and $\rho$ a representation of $G$, the \textit{Fourier transform} of $f$ with respect to $\rho$ is defined to be
$$\mathcal{F}_\rho(f)= \hat{f}(\rho)= \sum_{g\in G} f(g)\rho(g).$$
Let $\rho_{\rm reg}$ denote the left regular representation of $G$. Clearly, the adjacency matrix of the Cayley color graph $\Gamma(G; \alpha)$ is
$$\text{adj}(\Gamma(G; \alpha))= (\hat{\alpha}(\rho_{\rm reg}))^T= \sum_{g\in G} \alpha(g)(\rho_{\rm reg}(g))^T,$$
where $T$ indicates the usual transpose. The following result can be obtained by the Peter-Weyl theorem for a finite group, see also \cite[Chapter 3, E]{Dia}.

\begin{theorem}\label{thm21} Suppose that $G= \{g_1, \ldots, g_n\}$ is a finite group of order $n$ with irreducible unitary representations $\rho_1, \ldots, \rho_r$ of degrees $d_1, \ldots, d_r$. Let
$$(\rho_k)_{ij}= \sqrt{\frac{d_k}{n}}((\rho_k(g_1))_{ij}, \ldots, (\rho_k(g_n))_{ij})^T, 1\leq i, j\leq d_k, 1\leq k\leq r,$$
and
$$P= ((\rho_1)_{11}, \ldots, (\rho_1)_{d_1 1}, (\rho_1)_{12}, \ldots, (\rho_1)_{d_1d_1}, (\rho_2)_{11}, \ldots, (\rho_2)_{d_2d_2}, \ldots, (\rho_r)_{d_rd_r}).$$
If $f$ is a complex-valued function on $G$, then the Fourier transform of $f$ at $\rho_{\rm reg}$ can be written as
$$\hat{f}(\rho_{\rm reg})= \bar{P} {\rm diag}(I_{d_1}\otimes \hat{f}(\rho_1), \ldots, I_{d_r}\otimes \hat{f}(\rho_r))\bar{P}^{-1},$$
where $\bar{}$ denotes the conjugate.
\end{theorem}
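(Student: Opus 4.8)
The plan is to verify by a direct computation that the explicit matrix $\bar P$ conjugates the regular representation into the claimed block-diagonal form, and then to extend the identity from a single group element to the Fourier transform by linearity. First I would record the Schur orthogonality relations for the unitary irreducibles $\rho_1,\ldots,\rho_r$: for all admissible indices, $\sum_{g\in G}(\rho_k(g))_{ij}\overline{(\rho_l(g))_{i'j'}}=\frac{n}{d_k}\delta_{kl}\delta_{ii'}\delta_{jj'}$. With the normalization $\sqrt{d_k/n}$ this says precisely that the $n$ column vectors $(\rho_k)_{ij}$ are orthonormal in $\mathbb{C}^n$; since $\sum_k d_k^2=n$, they form an orthonormal basis, so $P$ is unitary and in particular $\bar P$ is invertible. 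This is the only step where representation-theoretic input (essentially the content of the Peter--Weyl theorem) is used.

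Next I would carry out the core computation: fix $h\in G$ and apply the permutation matrix $\rho_{\rm reg}(h)$ to the unnormalized column vector $u^{(k)}_{ij}=((\rho_k(g_1))_{ij},\ldots,(\rho_k(g_n))_{ij})^T$. Since the left regular representation sends the basis vector indexed by $g_t$ to the one indexed by $hg_t$, the $s$-th entry of $\rho_{\rm reg}(h)u^{(k)}_{ij}$ is $(\rho_k(h^{-1}g_s))_{ij}$; expanding $\rho_k(h^{-1}g_s)=\rho_k(h)^{-1}\rho_k(g_s)$ and using $\rho_k(h)^{-1}=\rho_k(h)^{*}$ (unitarity) gives $\rho_{\rm reg}(h)u^{(k)}_{ij}=\sum_{m=1}^{d_k}\overline{(\rho_k(h))_{mi}}\,u^{(k)}_{mj}$. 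Conjugating, and using that $\rho_{\rm reg}(h)$ has real entries, I obtain $\rho_{\rm reg}(h)\,\overline{u^{(k)}_{ij}}=\sum_{m=1}^{d_k}(\rho_k(h))_{mi}\,\overline{u^{(k)}_{mj}}$. In other words, for each $k$ and each fixed $j$ the span of $\{\,\overline{(\rho_k)_{ij}}:1\le i\le d_k\,\}$ is $\rho_{\rm reg}$-invariant, and the matrix of $\rho_{\rm reg}(h)$ on it, written in this ordered basis, is exactly $\rho_k(h)$. The subtlety that forces a conjugate — and hence makes $\bar P$ rather than $P$ appear in the statement — is precisely the unitarity identity $\rho_k(h)^{-1}=\overline{\rho_k(h)}^{\,T}$; keeping the row/column indices straight and matching the column ordering of $P$ (with $j$ running slowly and $i$ running fast inside each block $k$) so that the $d_k$ invariant subspaces assemble into the Kronecker block $I_{d_k}\otimes\rho_k(h)$, and not $\rho_k(h)\otimes I_{d_k}$, is the one genuinely fiddly point of the argument.

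Reading these relations columnwise then yields $\rho_{\rm reg}(h)\,\bar P=\bar P\cdot\mathrm{diag}(I_{d_1}\otimes\rho_1(h),\ldots,I_{d_r}\otimes\rho_r(h))$ for every $h\in G$, and since $\bar P$ is invertible, $\bar P^{-1}\rho_{\rm reg}(h)\bar P=\mathrm{diag}(I_{d_1}\otimes\rho_1(h),\ldots,I_{d_r}\otimes\rho_r(h))$. Finally I would multiply by $f(h)$ and sum over $h\in G$: the left-hand side becomes $\bar P^{-1}\hat f(\rho_{\rm reg})\bar P$, while on the right $\sum_{h}f(h)\,(I_{d_k}\otimes\rho_k(h))=I_{d_k}\otimes\sum_{h}f(h)\rho_k(h)=I_{d_k}\otimes\hat f(\rho_k)$, so the right-hand side becomes $\mathrm{diag}(I_{d_1}\otimes\hat f(\rho_1),\ldots,I_{d_r}\otimes\hat f(\rho_r))$. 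Rearranging gives the asserted identity $\hat f(\rho_{\rm reg})=\bar P\,\mathrm{diag}(I_{d_1}\otimes\hat f(\rho_1),\ldots,I_{d_r}\otimes\hat f(\rho_r))\,\bar P^{-1}$.
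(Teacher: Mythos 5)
Your proof is correct and complete. Note that the paper does not actually prove this theorem: it is stated as a consequence of the Peter--Weyl theorem with a pointer to Diaconis's book, so there is no in-paper argument to compare against. Your write-up supplies exactly the verification that is being outsourced to the reference: Schur orthogonality together with $\sum_k d_k^2=n$ gives unitarity (hence invertibility) of $P$ and $\bar P$; the computation $(\rho_{\mathrm{reg}}(h)u^{(k)}_{ij})_s=(\rho_k(h^{-1}g_s))_{ij}$ combined with $\rho_k(h)^{-1}=\rho_k(h)^{*}$ and the realness of the permutation matrices yields $\rho_{\mathrm{reg}}(h)\,\overline{u^{(k)}_{ij}}=\sum_m(\rho_k(h))_{mi}\,\overline{u^{(k)}_{mj}}$; and you correctly match this to the column ordering of $P$ ($j$ slow, $i$ fast) so that the invariant blocks assemble into $I_{d_k}\otimes\rho_k(h)$ rather than $\rho_k(h)\otimes I_{d_k}$. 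Summing against $f(h)$ then gives the stated identity. You have also correctly isolated the two points a careless reading would miss --- that the conjugate $\bar P$ (not $P$) is forced by unitarity, and that the Kronecker factor ordering is dictated by the enumeration of the columns --- both of which are consistent with how the theorem is used later in the paper.
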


A function $f$ on $G$ is called a \textit{class function} if $f(gg'g^{-1})= f(g')$ for all $g, g'\in G$. If $\rho$ is irreducible of character $\chi$, by Schur's lemma we know that $\hat{f}(\rho)$ is a homothety of ratio $\lambda$ given by
$$\lambda= \frac{1}{\chi(1)}\sum_{g\in G}f(g)\chi(g).$$
For a class function $\alpha$ and irreducible characters $\chi_1, \ldots, \chi_r$ of $G$, the spectrum of the Cayley color graph $\Gamma(G; \alpha)$ is given by
$$\lambda_k= \frac{1}{\chi_k(1)}\sum_{g\in G}\alpha(g)\chi_k(g), 1\leq k\leq r,$$
and the corresponding eigenspaces are
$$(\rho_k)_{ij}= \sqrt{\frac{\chi_k(1)}{n}}((\rho_k(g_1))_{ij}, \ldots, (\rho_k(g_n))_{ij})^T, 1\leq i, j\leq \chi_k(1), 1\leq k\leq r$$
independent of the choice of $\alpha$.

Let $S$ be the union of some conjugacy classes of $G$ and the class function
$$\alpha(g)= \left\{\begin{matrix}
1, & g\in S, \\
0, & g\notin S.
\end{matrix}\right.$$
Then spectra and eigenspaces of the normal Cayley graph $\Gamma(G, S)= \Gamma(G; \alpha)$ can be explicitly drawn from the above formulas.

\section{Construction of non-normal Cayley graphs}

Let $K= \{e= k_1, k_2, \ldots, k_m\}$ be a subgroup of $G$ and $H= \{e= h_1, h_2, \ldots, h_l\}$ a set of all left coset representatives of $K$ in $G$, called a left \textit{transversal}. Arranging the vertices of $\Gamma(G; \alpha)$ as
$$e, k_2, \ldots, k_m; h_2, h_2k_2, \ldots, h_2k_m; \dots; h_l, h_lk_2, \dots, h_lk_m,$$
the adjacency matrix with respect to the above ordering is a block matrix
$$\{\text{adj}(\Gamma(K; \beta_{ij}))\}_{i,j= 1}^l, \text{ where } \beta_{ij}(k)= \alpha(h_jkh_i^{-1}), \forall k\in K.$$

If $H$ is a subgroup of $G$ and $G$ is a semidirect product of $K$ and $H$ denoted by $K\rtimes H$, then $G$ is called a \textit{split extension} of $H$ by $K$. Let $\rho_1^H, \ldots, \rho_r^H$ be irreducible unitary representations of degrees $d_1^H, \ldots, d_r^H$ of $H$ with characters $\chi_1^H, \ldots, \chi_r^H$, and respectively $\rho_1^K, \ldots, \rho_s^K$ irreducible unitary representations of degrees $d_1^K, \ldots, d_s^K$ of $K$ with characters $\chi_1^K, \ldots, \chi_s^K$. Let $C_i, 1\leq i\leq r$ be the conjugacy classes of $H$ and $h_{C_i}\in C_i$. We can obtain the following main result.

\begin{theorem}\label{thm31}
Suppose that $G= K\rtimes H$, and the function $\alpha$ on $G$ satisfies
$$\alpha(hgkg^{-1})= \alpha(hk), \alpha(h'hh'^{-1}k)= \alpha(hk), \forall g\in G, \forall k\in K, \forall h,h'\in H,$$
then the spectrum of the Cayley color graph $\Gamma(G; \alpha)$ is given by
$$\frac{1}{d_u^Hd_v^K}\sum_{i= 1}^r(|C_i|\chi_u^H(h_{C_i})\sum_{k\in K}\alpha(h_{C_i}k)\chi_v^K(k)), 1\leq u\leq r, 1\leq v\leq s,$$
and the corresponding eigenspaces are
$$(\rho_u^H)_{ij}\otimes (\rho_v^K)_{i'j'}, 1\leq i, j\leq d_u^H, 1\leq i', j'\leq d_v^K, 1\leq u\leq r, 1\leq v\leq s.$$
\end{theorem}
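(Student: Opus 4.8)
The plan is to exploit the coset block structure of $\text{adj}(\Gamma(G;\alpha))$ recorded in Section~3 and to diagonalize it in two stages, first over $K$ and then over $H$. I would identify the vertex set $G$ with $H\times K$ via $g=h_ik_a$, so that with the ordering of Section~3 the space $\mathbb{C}G$ becomes $\mathbb{C}H\otimes\mathbb{C}K$ with $\mathbb{C}H$ as the outer factor, a vector $\xi=\phi\otimes\psi$ being given by $\xi(h_ik_a)=\phi(h_i)\psi(k_a)$. For $h'\in H$ set $\gamma_{h'}(k)=\alpha(h'k)$, $k\in K$. Because $G=K\rtimes H$ one has $h_jkh_i^{-1}=(h_jh_i^{-1})(h_ikh_i^{-1})$ with $h_jh_i^{-1}\in H$ and $h_ikh_i^{-1}\in K$, so the first hypothesis on $\alpha$ collapses the block data $\beta_{ij}(k)=\alpha(h_jkh_i^{-1})$ to $\gamma_{h_jh_i^{-1}}(k)$, and hence
$$(\text{adj}(\Gamma(G;\alpha))\,\xi)(h_ik_a)=\sum_{j,b}\gamma_{h_jh_i^{-1}}(k_bk_a^{-1})\,\xi(h_jk_b),\qquad \xi\in\mathbb{C}G.$$

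First I would diagonalize over $K$. Reading the first hypothesis with $h$ fixed and $g$ ranging over $K$ shows that each $\gamma_{h'}$ is a class function on $K$; by the normal Cayley color graph formulas of Section~2 (the class function case of Theorem \ref{thm21}) applied to $K$, the matrix $\text{adj}(\Gamma(K;\gamma_{h'}))$ therefore has the vectors $(\rho_v^K)_{i'j'}$ as eigenvectors, with eigenvalue $\lambda_v(h')=\frac{1}{d_v^K}\sum_{k\in K}\alpha(h'k)\chi_v^K(k)$, and crucially these eigenvectors do not depend on $h'$. Plugging $\xi=\phi\otimes(\rho_v^K)_{i'j'}$ into the formula above and recognizing the inner sum over $b$ as $\text{adj}(\Gamma(K;\gamma_{h_jh_i^{-1}}))$ acting on the eigenvector $(\rho_v^K)_{i'j'}$, I would obtain
$$\text{adj}(\Gamma(G;\alpha))\,\big(\phi\otimes(\rho_v^K)_{i'j'}\big)=\big(\text{adj}(\Gamma(H;\lambda_v))\,\phi\big)\otimes(\rho_v^K)_{i'j'}.$$

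Next I would diagonalize over $H$. The second hypothesis on $\alpha$ says exactly that each $\lambda_v$ is a class function on $H$, so the same Section~2 formulas applied to $H$ give that $\text{adj}(\Gamma(H;\lambda_v))$ has eigenvectors $(\rho_u^H)_{ij}$ with eigenvalue $\mu_{u,v}=\frac{1}{d_u^H}\sum_{h\in H}\lambda_v(h)\chi_u^H(h)$; combining with the previous identity (take $\phi=(\rho_u^H)_{ij}$) shows that $(\rho_u^H)_{ij}\otimes(\rho_v^K)_{i'j'}$ is an eigenvector of $\text{adj}(\Gamma(G;\alpha))$ with eigenvalue $\mu_{u,v}$. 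Since the $(\rho_u^H)_{ij}$ form a basis of $\mathbb{C}H$ and the $(\rho_v^K)_{i'j'}$ a basis of $\mathbb{C}K$ (the columns of the matrices $P$ of Theorem \ref{thm21} for $H$ and for $K$), their tensor products form an eigenbasis of $\mathbb{C}G$; this yields the claimed eigenspaces and shows that the $\mu_{u,v}$, each with multiplicity $(d_u^Hd_v^K)^2$, are all the eigenvalues. Finally I would substitute the definition of $\lambda_v$ into $\mu_{u,v}$ and use that $\lambda_v$ and $\chi_u^H$ are class functions on $H$ to pass from the sum over $H$ to a sum over the conjugacy classes $C_1,\dots,C_r$, obtaining
$$\mu_{u,v}=\frac{1}{d_u^Hd_v^K}\sum_{i=1}^r|C_i|\,\chi_u^H(h_{C_i})\sum_{k\in K}\alpha(h_{C_i}k)\chi_v^K(k),$$
the asserted spectrum.

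The part I expect to be the real obstacle is seeing that the two hypotheses on $\alpha$ are precisely the two class function conditions needed: the first both supplies the algebraic identity $h_jkh_i^{-1}=(h_jh_i^{-1})(h_ikh_i^{-1})$ that collapses $\beta_{ij}$ and guarantees that every $\gamma_{h'}$ is a $K$-class function, and the second guarantees that every $\lambda_v$ is an $H$-class function. Once these are established, the whole argument is just the two-stage diagonalization reducing $\Gamma(G;\alpha)$ to normal Cayley color graphs of $K$ and of $H$, and the surviving computations (the tensor/block identities and the rewriting of $\mu_{u,v}$ over conjugacy classes) are routine.
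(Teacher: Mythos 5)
Your argument is correct and follows essentially the same route as the paper: both rest on the coset block structure $\beta_{ij}(k)=\alpha(h_jkh_i^{-1})$, use the first hypothesis (via $h_jkh_i^{-1}=(h_jh_i^{-1})(h_ikh_i^{-1})$) to reduce the blocks to class functions on $K$, use the second to get a class function on $H$, and then invoke the Section~2 spectral formulas for class functions. Your two-stage diagonalization (first over $K$, then over $H$) is just an unpacking of the paper's single conjugation of $\sum_{i}\bigl(\sum_{h\in C_i}\rho_{\rm reg}^H(h)\bigr)^T\otimes(\widehat{\beta_{1C_i}}(\rho_{\rm reg}^K))^T$ by $P_H\otimes P_K$, so no substantive difference.
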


\begin{proof}
If $h_t= h_jh_i^{-1}, 1 \leq i, j, t\leq l$, then for any $k, k'\in K$ we see that
$$\beta_{ij}(k)= \alpha(h_jkh_i^{-1})= \alpha((h_jh_i^{-1})h_ikh_i^{-1})= \alpha(h_tk)= \beta_{1t}(k)$$
and
$$\beta_{1t}(k'kk'^{-1})= \alpha(h_tk'kk'^{-1})= \alpha(h_tk)= \beta_{1t}(k),$$
which imply that $\beta_{ij}, 1\leq i, j\leq l$ are all class functions on $K$ and
$${\rm adj}(\Gamma(G; \alpha))= \sum_{t= 1}^l (\rho_{\rm reg}^H(h_t))^T\otimes (\widehat{\beta_{1t}}(\rho_{\rm reg}^K))^T,$$
where $\rho_{\rm reg}^H$ and $\rho_{\rm reg}^K$ are left regular representations of $H$ and $K$ respectively.

Let $\beta_{1C_i}(k)= \alpha(h_{C_i}k)$, $\forall k\in K$. If $h_t= hh_{C_i}h^{-1}, h\in H$, then
$$\beta_{1t}(k)= \alpha(h_tk)= \alpha(hh_{C_i}h^{-1}k)= \alpha(h_{C_i}k)= \beta_{1C_i}(k).$$
Hence we have
$${\rm adj}(\Gamma(G; \alpha))= \sum_{i= 1}^r (\sum_{h\in C_i}(\rho_{\rm reg}^H(h))^T\otimes (\widehat{\beta_{1C_i}}(\rho_{\rm reg}^K))^T).$$

Suppose that
$$P_H= ((\rho_1^H)_{11}, \ldots, (\rho_1^H)_{d_1^H 1}, (\rho_1^H)_{12}, \ldots, (\rho_1^H)_{d_1^Hd_1^H}, (\rho_2^H)_{11}, \ldots, (\rho_2^H)_{d_2^Hd_2^H}, \ldots, (\rho_r^H)_{d_r^Hd_r^H}),$$
$$P_K= ((\rho_1^K)_{11}, \ldots, (\rho_1^K)_{d_1^K 1}, (\rho_1^K)_{12}, \ldots, (\rho_1^K)_{d_1^Kd_1^K}, (\rho_2^K)_{11}, \ldots, (\rho_2^K)_{d_2^Kd_2^K}, \ldots, (\rho_s^K)_{d_s^Hd_s^K}),$$
then by Theorem \ref{thm21} we have
\begin{equation*}
\aligned
&(P_H\otimes P_K)^{-1}{\rm adj}(\Gamma(G; \alpha))(P_H\otimes P_K)\\
=& \sum_{i= 1}^r (P_H^{-1}(\sum_{h\in C_i}\rho_{\rm reg}^H(h))^TP_H\otimes P_K^{-1}(\widehat{\beta_{1C_i}}(\rho_{\rm reg}^K))^TP_K)\\
=& \sum_{i= 1}^r {\rm diag}(\lambda_{1i}^H I_{(d_1^H)^2}, \ldots, \lambda_{ri}^H I_{(d_r^H)^2})\otimes {\rm diag}(\sigma_{1i}^K I_{(d_1^K)^2}, \ldots, \sigma_{si}^KI_{(d_s^K)^2}),
\endaligned
\end{equation*}
where
$$\lambda_{ui}^H= \frac{1}{d_u^H}|C_i|\chi_u^H(h_{C_i}), u= 1, \ldots, r, \sigma_{vi}^K= \frac{1}{d_v^K}\sum_{k\in K}\alpha(h_{C_i}k)\chi_v^K(k), 1\leq v\leq s.$$
Therefore the spectrum of the Cayley color graph $\Gamma(G; \alpha)$ is given by
$$\sum_{i= 1}^r \lambda_{ui}^H\sigma_{vi}^K, 1\leq u\leq r, 1\leq v\leq s,$$
and the corresponding eigenspaces are
$$(\rho_u^H)_{ij}\otimes (\rho_v^K)_{i'j'}, 1\leq i, j\leq d_u^H, 1\leq i', j'\leq d_v^K, 1\leq u\leq r, 1\leq v\leq s.$$
\end{proof}

\begin{remark}\label{rem32}
We consider the symmetric group $S_4= A_4\rtimes \langle(12)\rangle$, where $A_4$ is the alternating group and $\langle(12)\rangle$ denotes the subgroup generated by the transposition $(12)$. The class function $\alpha$ on $S_4$ takes defferent values in defferent conjugacy classes. Then
$$\alpha((12)(1432)(234)(1432)^{-1})= \alpha((23))\neq \alpha((1234))= \alpha((12)(234)).$$
Hence $\alpha$ does not satisfy the conditions of Theorem \ref{thm31}. If $K$ and $H$ are both abelian subgroups of $G$, then class functions on $G$ obviously satisfy the conditions of Theorem \ref{thm31}. The following conclusion implies that there exist many non-class functions that satisfy the conditions of Theorem \ref{thm31}. 
\end{remark}

Let $C_m$ and $C_l$ be cyclic groups of order $m$ and $l$ generated by $k$ and $h$ respectively. If $G$ is a split extension of $C_l$ by $C_m$, i.e. $G= C_m\rtimes C_l$, we call $G$ a \textit{split metacyclic group}. The following conclusion can be obtained directly from Theorem \ref{thm31}.
\begin{corollary}\label{col33}
Suppose that $G$ is a split metacyclic group, i.e. $G= C_m\rtimes C_l$, and
$$S= S_0\cup hS_1\cup \cdots \cup h^{l-1}S_{l-1},$$
where $S_i\subseteq C_m$ and $hS_i h^{-1}\subseteq S_i, 0\leq i\leq l-1$. Then the spectrum of the Cayley graph $\Gamma(G, S)$ is given by
$$\sum_{t= 0}^{l-1}(e^{2\pi iut/l}\sum_{k^s\in S_t}e^{2\pi ivs/m}), 0\leq u\leq l-1, 0\leq v\leq m-1,$$
and the corresponding eigenspaces are
$$\frac{1}{\sqrt{l}}(1, e^{2\pi iu/l}, \ldots, e^{2\pi iu(l-1)/l})^T\otimes \frac{1}{\sqrt{m}}(1, e^{2\pi iv/m}, \ldots, e^{2\pi iv(m-1)/m})^T.$$
\end{corollary}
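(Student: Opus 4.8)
The plan is to specialize Theorem \ref{thm31} to the case $H= C_l$ and $K= C_m$, both cyclic hence abelian. First I would verify that the hypotheses of Theorem \ref{thm31} are met: since $K= C_m$ is abelian, every conjugacy class of $G$ that lies in $K$ is a single point, and more to the point the only thing we need is that the indicator function $\alpha$ of $S$ satisfies $\alpha(hgkg^{-1})= \alpha(hk)$ and $\alpha(h'hh'^{-1}k)= \alpha(hk)$. The first follows because $H$ acts on the normal subgroup $K$ and conjugation by $G$ restricted to $K$ agrees with conjugation by the $H$-part, combined with the stated condition $hS_ih^{-1}\subseteq S_i$ (so each $S_i$ is $H$-invariant), while the second is automatic because $H= C_l$ is abelian so $h'hh'^{-1}= h$. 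I would spell this out in a sentence or two.

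Next I would identify the pieces appearing in the conclusion of Theorem \ref{thm31}. The conjugacy classes $C_i$ of $H= C_l$ are the singletons $\{h^t\}$, $t= 0,\ldots,l-1$, so $r= l$, each $|C_i|= 1$, and $h_{C_i}= h^t$. The irreducible characters of $C_l$ are the one-dimensional characters $\chi_u^H(h^t)= e^{2\pi i ut/l}$, $u= 0,\ldots,l-1$, all of degree $d_u^H= 1$; likewise the irreducible characters of $C_m$ are $\chi_v^K(k^s)= e^{2\pi i vs/m}$, $v= 0,\ldots,m-1$, of degree $d_v^K= 1$. Writing the coset decomposition $S= \bigcup_{t} h^t S_t$, we have $\sum_{k\in K}\alpha(h^tk)\chi_v^K(k)= \sum_{k^s\in S_t} e^{2\pi i vs/m}$, since $\alpha(h^t k)= 1$ exactly when $k\in S_t$. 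Plugging $d_u^H= d_v^K= 1$ and $|C_i|= 1$ into the eigenvalue formula $\frac{1}{d_u^Hd_v^K}\sum_{i}|C_i|\chi_u^H(h_{C_i})\sum_{k\in K}\alpha(h_{C_i}k)\chi_v^K(k)$ yields precisely $\sum_{t=0}^{l-1}\bigl(e^{2\pi i ut/l}\sum_{k^s\in S_t} e^{2\pi i vs/m}\bigr)$.

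For the eigenspaces, the matrix entries $(\rho_u^H)_{ij}$ collapse to a single entry since $d_u^H= 1$: by Theorem \ref{thm21} and the normalization there, $(\rho_u^H)_{11}= \sqrt{1/l}\,(\chi_u^H(e),\chi_u^H(h),\ldots,\chi_u^H(h^{l-1}))^T= \frac{1}{\sqrt l}(1,e^{2\pi i u/l},\ldots,e^{2\pi i u(l-1)/l})^T$, and similarly $(\rho_v^K)_{11}= \frac{1}{\sqrt m}(1,e^{2\pi i v/m},\ldots,e^{2\pi i v(m-1)/m})^T$. The tensor product of these two vectors is the claimed eigenspace. I would also note that one should check the vertex ordering $e,k,\ldots,k^{m-1};h,hk,\ldots$ used in Section 3 is compatible with the Kronecker-product convention so that the tensor factors line up in the stated order.

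I do not expect a genuine obstacle here — the corollary is a direct specialization — so the only thing requiring care is bookkeeping: making sure the indexing conventions (which factor is the "outer" one in the tensor product, whether $u$ runs over $H$ or $K$) are consistent between Theorem \ref{thm31} and the corollary's statement, and confirming the $H$-invariance condition $hS_ih^{-1}\subseteq S_i$ really is what makes $\alpha$ satisfy the first hypothesis of Theorem \ref{thm31}. Once those conventions are pinned down, the proof is a one-line substitution.
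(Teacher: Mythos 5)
Your proposal is correct and matches the paper's intent exactly: the paper offers no written proof beyond the remark that the corollary "can be obtained directly from Theorem \ref{thm31}", and your specialization (abelian $H$ and $K$ trivializing the second hypothesis, $hS_ih^{-1}\subseteq S_i$ together with $K$ abelian giving the first, then substituting the one-dimensional characters of $C_l$ and $C_m$) is precisely that direct derivation. No gaps.
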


\begin{remark}\label{rem34}
For $G= C_m\rtimes C_l$ with $hkh^{-1}= k^r, 1< r< m$, suppose that $S= (C_m\setminus\{e\})\cup \{h\}\cup \{h^{-1}\}$. Then $S$ satisfy the conditons of Corollary \ref{col33}, but is not closed under conjugation since $k^rhk^{-r}= hk^{1-r}\notin S$. Therefore We can easily choose $S_i$ closed under the conjugation of $h$ to construct many non-normal Cayley graphs, and exactly give spectra and eigenspaces of  these graphs by Corollary \ref{col33}.
\end{remark}

\begin{remark}\label{rem35}
Let $H$ and $K$ be subgroups of a group $G$ with identity element $e$. If $G= HK$ and $H\cap K= \{e\}$, then $G$ is said to be a \textit{Zappa-Szép product} of $H$ and $K$. As a generalization of the direct and semidirect products, we can consider using it construct non-normal Cayley graphs. In addition, a composition series of a group can also be used to construct non-normal Cayley graphs.
\end{remark}

\vspace{2mm}
\noindent
{\bf Acknowledgments.} This research is partially supported by the National Natural Science Foundation of China (No. 12271210) and the Natural Science Foundation
of Fujian Province, China (No. 2021J01862).

\end{document}